\newenvironment{proof}{{\bf Proof:  }}{\hfill\rule{2mm}{2mm}}
\newtheorem{theorem}{Theorem}
\newtheorem{lemma}[theorem]{Lemma}
\newtheorem{proposition}[theorem]{Proposition}
\newtheorem{corollary}[theorem]{Corollary}
\newtheorem{maintheorem}{Main Theorem}
\pgfplotsset{compat=1.18}
\newcommand{\Z}{\mathbb{Z}}
\newcommand{\diff}{\mathrm{diff}}
\DeclareMathOperator{\diam}{diam}
\title{Minimum packing density for sets of four integers}
\author{
Cindy Li \thanks{Carnegie Mellon University, \texttt{cindyli@andrew.cmu.edu}} 
\and 
David Offner\thanks{Carnegie Mellon University, \texttt{doffner@andrew.cmu.edu}}
\and Iris Ye \thanks{Carnegie Mellon University, \texttt{maolinsy@andrew.cmu.edu}} 
}
\date{\today}
\begin{document}

\maketitle

\begin{abstract}
We prove that the set \( \{0, 1, 4, 6\} \) achieves the minimum packing density among all sets of integers with cardinality four, with a density of \( \frac{1}{7} \). 
\end{abstract}

\section{Introduction}
Let $\Z$ be the set of integers, and throughout the paper $S$ will always be a subset of $\Z$. Given $T \subseteq \Z$, define $S+T := \{s+t: s \in S, t \in T\}$.  Similarly, given $t \in \Z$, define $S + t = S + \{t\} = \{s+t: s \in S\}$, and $t + S := \{t\} + S = \{t+s: s \in S \}$. Note $t + S = S + t$ for all $S \subseteq \Z$, $t \in \Z$.

Given a set $S$, we say a set $A \subseteq \Z$ is \emph{$S$-packing} if for all $a_1, a_2 \in A$ s.t. $a_1 \neq a_2$, $S+a_1$ and $S+a_2$ are disjoint. For example, if $S = \{0,1,4,6\}$, then $A = \{7n: n \in \Z\}$ is an $S$-packing set.

Given a finite set $S$, we are interested in the densest set $A$ such that $A$ is $S$-packing. 
For a set $A \subseteq \Z$, define the \emph{upper density $\overline{d}(A)$} and  \emph{lower density $\underline{d}(A)$} as follows:
$$\overline{d}(A) := \limsup_{n \to \infty}{\frac{|A \cap [-n,n]|}{2n+1}} \hspace{1.5cm} \underline{d}(A) := \liminf_{n \to \infty}{\frac{|A \cap [-n,n]|}{2n+1}}.$$

If $\overline{d}(A) = \underline{d}(A)$, define the \emph{density $d(A)$} of $A$ as 
$$d(A) := \overline{d}(A) = \underline{d}(A).$$

For example, if  $A = \{7n: n\ \in \Z\}$, then $d(A) = \overline{d}(A) = \underline{d}(A) = 1/7$.

We now define the \emph{packing density} of $S$, denoted  $d_p(S)$, as 
$$d_p(S) := \sup\{\overline{d}(A): A \text{ is } S \text{-packing}\}.$$

Many researchers have studied packing alongside the dual concept of covering. Define a set \( A \subseteq \mathbb{Z} \) as \textit{$S$-covering} if $S+A = \Z$, and define the \textit{covering density} of \( S \), denoted \( d_c(S) \), as the infimum of the lower densities \( \underline{d}(A) \) of all \( S \)-covering sets \( A \subseteq \mathbb{Z} \).

Motivated by Newman \cite{d__j__newman_1967}, Schmidt and Tuller \cite{SchmidtTuller2008} proposed a conjecture regarding the packing and covering densities for integer sets of cardinality three. Later, Frankl, Kupavskii, and Sagdeev \cite{Frankl2023} proved an explicit formula for the packing and covering densities of such sets, confirming the conjecture. For integer sets with size $k$ where $k \ge 5$, Weinstein \cite{weinstein1976covering} proved the packing density is upper-bounded by $\left( \frac{k^2}{4} + \frac{11k}{6} - 14 \right)^{-1}$.

Our main result concerns the minimum packing density of any set of integers with cardinality four. 

\begin{maintheorem} 
\label{main_thm}
    If $S$ is a set of integers with cardinality four, then $d_p(S) \ge 1/7$.  Furthermore, the set $\{0,1,4,6\}$ achieves this minimum packing density: $d_p(\{0,1,4,6\}) = 1/7$.
\end{maintheorem}

The proof is given in Sections~\ref{ub} and \ref{lb}, where we prove upper bounds and lower bounds on packing density, respectively.  First, we make an observation about $S$-packing sets that will be used in both sections. Define the \emph{difference set} of $S$, denoted $\diff(S)$, as $\diff(S) := \{s-t:s,t \in S, s \ge t\}$. For example, if $S = \{0,1,3\}$, then $\diff(S) = \{0,1,2,3\}$, and if $S = \{0,2,7\}$, then $\diff(S) = \{0,2,5,7\}$.  Note that if $S$ is finite, then $|\diff(S)| \le \binom{|S|}{2} + 1$.

\begin{proposition}
\label{diff}
Suppose $S, A \subseteq \Z$.  Then $A$ is $S$-packing if and only if for all $a,b \in A$ with $a<b$, $b-a \notin \diff(S)$.
\end{proposition}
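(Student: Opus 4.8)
The plan is to prove the contrapositive-free biconditional directly by unwinding the definition of $S$-packing and translating the disjointness condition into an arithmetic statement about differences. The key observation is that two translates $S+a$ and $S+b$ intersect precisely when there exist $s_1, s_2 \in S$ with $s_1 + a = s_2 + b$, i.e. when $b - a = s_1 - s_2$ for some $s_1, s_2 \in S$. So the whole proof hinges on showing that the set of values $\{s_1 - s_2 : s_1, s_2 \in S\}$ (signed differences, both orders) is exactly captured by the condition ``$b-a \in \diff(S)$'' when $b > a$, given that $\diff(S)$ as defined only records nonnegative differences.

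Concretely, I would argue as follows. Fix $a, b \in A$ with $a < b$. First suppose $A$ is $S$-packing, so $S + a$ and $S + b$ are disjoint. If we had $b - a \in \diff(S)$, then by definition of $\diff(S)$ there are $s, t \in S$ with $s \ge t$ and $b - a = s - t$; rearranging gives $t + b = s + a$, so this common value lies in both $S + b$ and $S + a$, contradicting disjointness. Hence $b - a \notin \diff(S)$. Conversely, suppose $b - a \notin \diff(S)$ for every such pair, and suppose toward a contradiction that $S + a$ and $S + b$ are not disjoint for some $a < b$ in $A$. Then there is a common element $s + a = t + b$ with $s, t \in S$, so $s - t = b - a > 0$, hence $s > t$, and therefore $b - a = s - t \in \diff(S)$ by definition — a contradiction. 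Since this holds for every pair $a \ne b$ in $A$ (relabeling so the smaller one is $a$), $A$ is $S$-packing.

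I would also note the one small bookkeeping point that the definition of $S$-packing quantifies over ordered pairs $a_1 \ne a_2$ while the proposition quantifies over $a < b$; this is harmless because $S + a_1$ and $S + a_2$ being disjoint is symmetric in $a_1, a_2$, so it suffices to check the condition for the pair with the smaller element listed first. There is no real obstacle here — this proposition is essentially a definitional reformulation, and the only thing to be careful about is the sign convention in $\diff(S)$, making sure that requiring $b - a \notin \diff(S)$ for $b > a$ correctly rules out both $s - t$ and $t - s$ coincidences (it does, because if $s + a = t + b$ with $a < b$ then necessarily $s > t$, so only the nonnegative difference $s - t$ can occur). I would keep the write-up to a few lines, presenting the two directions as above.
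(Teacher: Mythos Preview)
Your proposal is correct and follows essentially the same approach as the paper's proof: both arguments translate non-disjointness of $S+a$ and $S+b$ into the equation $a+s=b+t$, rearrange to $b-a=s-t$, and use the sign observation that $a<b$ forces $s>t$ so that $s-t\in\diff(S)$. The only cosmetic difference is that the paper phrases it as proving the contrapositive of each direction, while you prove the directions directly by contradiction; the underlying steps are identical.
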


\begin{proof}
We will prove that $A$ is not $S$-packing if and only if there exists $a, b \in A$ such that $a < b$ and $b - a \in \diff(S)$. Let $a,b \in A$ be such that $a < b$ and $b-a \in \diff(S)$.  By the definition of $\diff(S)$, there exist $s,t \in S$ such that $b-a = s-t$.  Thus $a+s = b+t$.  This implies that $(b+S) \cap (a+S) \neq \emptyset$, so $A$ is not $S$-packing.

Conversely, assume $A$ is not $S$-packing.  This implies there exists $a,b \in A$ with $a<b$ and $(b+S) \cap (a+S) \neq \emptyset$, so there exist $s,t \in S$ such that $a+s = b+t$, and equivalently $b-a = s-t$.  Since $a<b$, $s>t$, so $s-t \in \diff(S)$. Hence, $b-a \in \diff(S)$.
\end{proof}

\section{An upper bound on packing density}
\label{ub}

\begin{proposition}
\label{basis}
If $\{0, \dots, n\} \subseteq \diff(S)$ for some $n \in \Z$, then $d_p(S) \le \frac{1}{n+1}$.
\end{proposition}

\begin{proof}
Suppose $A$ is $S$-packing, and $\{0, \dots, n\} \subseteq \diff(S)$ for some $n \in \Z$.  For $t \in \Z$, let $I(t)$ be the set of $n+1$ consecutive integers $\{t(n+1), t(n+1)+1, \dots, t(n+1) +n\}$. Let $a,b$ be two elements of $A$.  If $a,b \in I(t)$ with $a<b$, then $b-a \in \{0, \dots, n\} \subseteq \diff(S)$.  Thus by Proposition~\ref{diff}, at most one element of $A$ can be in $I(t)$ for each $t$.

For any $k \ge 1$, since $[-k(n+1), k(n+1)] =  \bigcup_{t=-k}^{k-1} I(t) \cup \{k(n+1)\}$,
\[\frac{|A \cap [-k(n+1),k(n+1)]|}{2k(n+1)+1} \le \frac{2k+1}{2k(n+1)+1}.\]

Thus
$$\overline{d}(A) \le \limsup_{k \to \infty}{\frac{2k+1}{2k(n+1)+1}} \le \frac{1}{n+1}\limsup_{k \to \infty}\frac{2k+1}{2k} = \frac{1}{n+1}.$$

We conclude every $S$-packing set must have upper density at most $\frac{1}{n+1}$, so by the definition of packing density, $d_p(S) \le \frac{1}{n+1}$.
\end{proof}

\begin{corollary}
\label{upperbound}
The packing density of $\{0,1,4,6\}$ is at most $\frac{1}{7}$, i.e.
    $$d_p(\{0,1,4,6\}) \le \frac{1}{7}.$$
\end{corollary}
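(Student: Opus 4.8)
The plan is to apply Proposition~\ref{basis} directly with the set $S = \{0,1,4,6\}$, so the only thing to verify is that $\{0,1,\dots,6\} \subseteq \diff(S)$; once this is established, taking $n = 6$ immediately yields $d_p(\{0,1,4,6\}) \le \frac{1}{n+1} = \frac{1}{7}$.

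To check the inclusion, I would simply compute $\diff(S)$ by listing all differences $s - t$ with $s, t \in \{0,1,4,6\}$ and $s \ge t$. The value $0$ comes from $s = t$; $1 = 1 - 0$; $2 = 6 - 4$; $3 = 4 - 1$; $4 = 4 - 0$; $5 = 6 - 1$; $6 = 6 - 0$. Hence $\{0,1,2,3,4,5,6\} \subseteq \diff(S)$, which is all that is needed (in fact $\diff(S) = \{0,1,2,3,4,5,6\}$ exactly, though we only need the containment).

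There is essentially no obstacle here: the corollary is an immediate specialization of Proposition~\ref{basis}, and the combinatorial content is the one-line check that every integer from $0$ to $6$ arises as a difference of two elements of $\{0,1,4,6\}$. The only mild point worth noting is that $\{0,1,4,6\}$ has $\binom{4}{2} + 1 = 7$ as the maximum possible size of its difference set, and since all seven consecutive values $0$ through $6$ are realized, the difference set is as "efficient" as possible — this is exactly why $\frac{1}{7}$ is attainable and is the right target bound.
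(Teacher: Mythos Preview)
Your proof is correct and follows exactly the same approach as the paper: compute $\diff(\{0,1,4,6\}) = \{0,1,2,3,4,5,6\}$ and apply Proposition~\ref{basis} with $n=6$. Your explicit listing of which pair realizes each difference is a nice touch, but otherwise the arguments are identical.
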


\begin{proof}
    Since $\diff{(\{0,1,4,6\})} = \{0,1,2,3,4,5,6\}$, applying Proposition \ref{basis} with $n = 6$, we get $d_p(\{0,1,4,6\}) \le \frac{1}{6+1} = \frac{1}{7}$.
\end{proof}

\section{A lower bound on packing density}
\label{lb}

In this section, given a finite set of integers $S$, we describe a ``greedy packing algorithm'' to generate an $S$-packing set $A$, while proving properties of this set along the way. The set $A$ may not be the densest $S$-packing set, but we will show that it has density at least $1/|\diff(S)|$. For every $i \ge 0$, the algorithm iteratively produces sequences $\{t_i\}$ and $\{A_i\}$, where $t_i \in \Z$ and $A_i$ is $S$-packing, as follows:  Initially, set $A_{-1} = \emptyset$. Then, for $i \ge 0$, define $A_i = A_{i-1} \cup \{t_i\}$, where
$$t_i = \min \{x :x \ge 0, x \notin (A_{i-1} + \diff(S))\}.$$

Note $t_0=0$, and for all $i \ge -1$, $A_i  \subseteq A_{i+1}$.  Thus,  $A_i + \diff(S) \subseteq A_{i+1}+\diff(S)$, so since in each step $t_i \ge 0$ is chosen to be minimal, $t_{i} < t_{i+1}$.

As an example, if $S = \{0,4,5\}$, then $t_0 = 0$, $t_1 = 2$, $t_2 = 8$, $t_3 = 10$, and so forth.  In this example $A_0= \{0\}$, $A_1= \{0,2\}$, $A_2= \{0,2,8\}$, $A_3= \{0,2,8,10\}$, and so forth.

\begin{lemma}
\label{ti_UB}
    Let $S$ be a finite set. For all $i \ge 0$, $t_i \le i|\diff(S)|$.
\end{lemma}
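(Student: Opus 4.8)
The plan is to prove the bound $t_i \le i|\diff(S)|$ by induction on $i$, showing that each step of the greedy algorithm advances $t_i$ by at most $|\diff(S)|$ relative to $t_{i-1}$. The base case $i=0$ is immediate since $t_0 = 0$. For the inductive step, I would assume $t_{i-1} \le (i-1)|\diff(S)|$ and aim to show $t_i \le t_{i-1} + |\diff(S)|$, which combined with the inductive hypothesis gives $t_i \le i|\diff(S)|$.

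**The key observation** is that $t_i$ is defined as the smallest nonnegative integer not lying in $A_{i-1} + \diff(S)$, and every element of $A_{i-1} + \diff(S)$ that could "block" a candidate value near $t_{i-1}$ must come from $t_{i-1} + \diff(S)$ together with earlier blocks. More precisely, I would consider the window of $|\diff(S)| + 1$ consecutive integers $\{t_{i-1}+1, t_{i-1}+2, \dots, t_{i-1} + |\diff(S)| + 1\}$ — wait, more carefully: I want to show that at least one integer in a suitable window just past $t_{i-1}$ is available. Since $t_{i-1}$ was chosen minimally at the previous stage, every integer $x$ with $0 \le x < t_{i-1}$ lies in $A_{i-2} + \diff(S) \subseteq A_{i-1} + \diff(S)$, so $t_i > t_{i-1}$ (as already noted in the excerpt). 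Now among the $|\diff(S)|$ consecutive integers $t_{i-1}+1, \dots, t_{i-1}+|\diff(S)|$, I claim at least one is not in $A_{i-1} + \diff(S)$. Hmm, that's not quite enough room — let me instead look at $t_{i-1}+1, \dots, t_{i-1} + |\diff(S)|$ and count: each element $a \in A_{i-1}$ contributes the set $a + \diff(S)$, but only $a = t_{i-1}$ can contribute elements in this range that aren't already accounted for... Actually the cleanest argument: the set $\left(A_{i-1} + \diff(S)\right) \cap \{t_{i-1}, t_{i-1}+1, \dots\}$, intersected with any window of length $|\diff(S)|$ starting at $t_{i-1}$, has size at most $|\diff(S)|$ because — I need that no $a \in A_{i-1}$ with $a < t_{i-1}$ contributes to $\{t_{i-1}+1,\dots\}$ beyond what $t_{i-1}$ already blocks, which requires a monotonicity/greedy property I should extract as a sub-claim.

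**The main obstacle** will be pinning down exactly why the blocking set in the relevant window has size at most $|\diff(S)|$, i.e., controlling the contribution of the earlier points $A_{i-2}$. The right formulation is probably: show that the set of available nonnegative integers, i.e. $\Z_{\ge 0} \setminus (A_{i-1}+\diff(S))$, when restricted to $\{t_{i-1}, \dots, t_{i-1}+|\diff(S)|\}$, is nonempty, using that $\{t_{i-1}, t_{i-1}+1, \dots, t_{i-1} + |\diff(S)|\}$ has $|\diff(S)|+1$ elements while $t_{i-1} + \diff(S)$ has only $|\diff(S)|$ elements, and crucially that $\left(A_{i-2}+\diff(S)\right) \cap \{t_{i-1}, \dots, t_{i-1}+|\diff(S)|\} \subseteq \left(t_{i-1}+\diff(S)\right) \cup \{t_{i-1}\}$ — or more simply, that once we restrict to integers $\ge t_{i-1}$, the only member of $A_{i-1}$ whose $\diff(S)$-translate can reach into the window $(t_{i-1}, t_{i-1}+|\diff(S)|]$ "freshly" is $t_{i-1}$ itself, while the older points only re-block $t_{i-1}$. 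I would state and prove this as a short lemma or inline claim, then conclude: the window $\{t_{i-1}+1, \dots, t_{i-1}+|\diff(S)|\}$ has $|\diff(S)|$ elements, at most $|\diff(S)|-1$ of which lie in $t_{i-1} + \diff(S)$ (since $t_{i-1} \in t_{i-1}+\diff(S)$ but $t_{i-1}$ is outside the window), hence at least one element of the window is available, giving $t_i \le t_{i-1} + |\diff(S)|$. Then induction finishes the proof.
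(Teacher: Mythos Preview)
Your inductive approach relies on the step bound $t_i \le t_{i-1} + |\diff(S)|$, but this bound is false in general. Take the paper's own running example $S = \{0,4,5\}$, so that $\diff(S) = \{0,1,4,5\}$ and $|\diff(S)| = 4$. The greedy algorithm produces $t_0 = 0$, $t_1 = 2$, $t_2 = 8$, and hence $t_2 - t_1 = 6 > 4 = |\diff(S)|$. The sub-claim that breaks is precisely the one you flagged as the ``main obstacle'': the assertion that older points of $A_{i-2}$ only re-block $t_{i-1}$ and contribute nothing new to the window $(t_{i-1},\, t_{i-1}+|\diff(S)|]$. In the example, the older point $t_0 = 0$ has $0 + \diff(S) = \{0,1,4,5\}$, and the elements $4,5$ lie in the window $(2,6]$, blocking integers that $t_1 + \diff(S) = \{2,3,6,7\}$ does not. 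So earlier points genuinely block fresh positions in the window, and no per-step bound of this form can work.

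The paper sidesteps this entirely with a global counting argument rather than a step-by-step one: since $t_i$ is the least nonnegative integer not in $A_{i-1} + \diff(S)$, one has $t_i \le |A_{i-1} + \diff(S)|$, and a union bound over the $i$ translates $t_k + \diff(S)$ for $0 \le k \le i-1$ gives $|A_{i-1} + \diff(S)| \le i\,|\diff(S)|$ directly. No induction is needed.
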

\begin{proof}
For $i \ge 0$, since $t_i$ is the smallest nonnegative integer not in $A_{i-1} + \diff(S)$, $t_i \le |A_{i-1}+\diff(S)|$.
Thus it suffices to show that for all $i \ge 0$, 
    $$|A_{i-1}+\diff(S)| \le i|\diff(S)|.$$ 
This follows directly from the fact that for $i\ge 0$, $A_{i-1} = \{t_0, t_1, \dots, t_{i-1}\}$ has cardinality $i$. Thus

\[|A_{i-1}+\diff(S)| = |\{t_0, t_1, \dots, t_{i-1}\}+\diff(S)|
= |\bigcup_{k=0}^{i-1} t_{k}+\diff(S)| \le \sum_{k=0}^{i-1} |t_{k}+\diff(S)| = i|\diff(S)|.\]
\end{proof}

Next, define the set $A_\infty$ to be the union of all $A_i$'s produced by the greedy packing algorithm:
$$A_{\infty} = \bigcup_{i \ge 0} A_i.$$

In the example where $S = \{0,4,5\}$, $A_\infty = \{0, 2, 8, 10, 16, 18, 24, 26, \dots \}$.

Define a set $A \subseteq \Z$ to be \emph{periodic} if there is some positive integer $p$ such that for all $i \in \Z$, 
\[i \in A \iff i+p \in A,\] 
and we say $A$ has period $p$. Given $A \subseteq \Z$ and integers $a$ and $b$ with $b>a$, define the \emph{interval density $d(A, a, b)$} of $A \subseteq \Z$ restricted to the interval $[a,b-1] \subseteq \Z$ as
$$d(A, a, b) := \frac{|A \cap [a,b-1]|}{b-a}.$$

Note that if $A$ is periodic with period $p$, $d(A)$ is defined and $d(A) = d(A,a,a+p)$ for any $a \in \Z$. For $a \in \Z$ and $p \in \Z$ with $p>0$, define a set $A \subseteq \Z$ to be \emph{$a$-forward-periodic with period $p$} if, for all $i \ge 0$, $$a+i \in A \iff a+i+p \in A.$$
For example, the set $\{0, 2, 8, 10, 16, 18, 24, 26, \dots \}$ is $0$-forward periodic with period $8$.

 For a set of integers $S$, define the \emph{diameter} of $S$, denoted $\diam(S)$, as 
 $$\diam(S) := \sup{\{s-t: s,t \in S\}}.$$

\begin{lemma}
\label{Ainf_FP}
    There exist $a, b \ge 0$ with $a < b$ such that $A_\infty$ is $a$-forward periodic with period $b-a$.
\end{lemma}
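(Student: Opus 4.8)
The plan is to show that the behavior of the greedy algorithm is eventually governed only by a bounded window of recent history, so that some ``state'' must repeat, forcing periodicity. The key observation is that whether an integer $x$ lies in $A_{i-1} + \diff(S)$ depends only on which of the integers in $[x - \diam(S), x]$ belong to $A_{i-1}$: if $t_k \le x - \diam(S) - 1$, then $t_k + \diff(S) \subseteq [t_k, t_k + \diam(S)] \subseteq [-\infty, x-1]$ contributes nothing to membership of $x$ or any larger integer. So define, for each threshold $m \ge 0$, the ``state'' $\sigma(m) := A_\infty \cap [m - \diam(S), m-1]$ recorded as a subset of $\{-\diam(S), \dots, -1\}$ by shifting by $-m$. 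There are only finitely many possible states (at most $2^{\diam(S)}$ of them).

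First I would make precise the claim that the greedy choices are a deterministic function of the current state. Concretely: let $w = \diam(S)$. I claim that for $m$ large enough (say $m > t_0 = 0$, which is automatic, but more carefully once we are past any boundary effects near $0$), the set $A_\infty \cap [m, \infty)$ is completely determined by $\sigma(m)$. The reason is that the greedy rule ``$t_i = \min\{x \ge 0 : x \notin A_{i-1} + \diff(S)\}$'' can be re-read as: scanning integers left to right, put $x$ into $A_\infty$ iff $x \notin (A_\infty \cap [x-w, x-1]) + \diff(S)$, because elements of $A_\infty$ that are $\ge x$ are chosen later and elements $< x - w$ are irrelevant by the diameter argument. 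Thus $A_\infty$ restricted to $[m,\infty)$ is generated by a cellular-automaton-like recurrence whose only memory is the last $w$ cells.

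Next, since there are finitely many states and the state $\sigma(m)$ is defined for every $m \ge w$ (using that $A_\infty$ is infinite — which follows because $t_i < t_{i+1}$ for all $i$, so the $t_i$'s are unbounded — the window $[m-w, m-1]$ is always a well-defined finite set), by pigeonhole there exist $a < b$ with $a \ge w$ and $\sigma(a) = \sigma(b)$. Since the tail of $A_\infty$ from position $m$ onward is a function of $\sigma(m)$ alone, $A_\infty \cap [a, \infty)$ and $A_\infty \cap [b, \infty)$ agree after shifting by $b - a$; that is, for all $i \ge 0$, $a + i \in A_\infty \iff b + i = (a+i) + (b-a) \in A_\infty$. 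This is exactly the statement that $A_\infty$ is $a$-forward periodic with period $b - a$, and $a, b \ge 0$, $a < b$ as required.

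The main obstacle I expect is making the ``determined by the state'' claim fully rigorous, in particular carefully handling the induction that defines the tail from the state and confirming that the greedy algorithm's left-to-right scan really does coincide with this local recurrence (one must check that when the algorithm places $t_i$, no later element $t_j$ with $j > i$ falls below $t_i$, which is immediate from $t_i < t_{i+1}$, and that membership queries only ever look back a distance $\le \diam(S)$). Once that lemma is in hand, the pigeonhole step is routine. I would also remark that $A_\infty$ being $a$-forward periodic does not by itself pin down its density, but combined with Lemma~\ref{ti_UB} it will give the desired density lower bound in the subsequent argument.
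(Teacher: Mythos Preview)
Your proposal is correct and is essentially the same argument as the paper's: encode a length-$\diam(S)$ window of $A_\infty$ as a state, apply pigeonhole to find $a<b$ with matching states, and then argue (by the induction you flag as the ``main obstacle'') that the greedy rule is a local recurrence depending only on the last $\diam(S)$ cells, forcing the tails from $a$ and $b$ to coincide. The only cosmetic difference is that the paper's window $F(t)$ looks forward over $[t,\,t+\diam(S)-1]$ rather than backward, which lets it take $a\ge 0$ without your boundary restriction $a\ge \diam(S)$, and the paper writes out the induction step explicitly.
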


\begin{proof}
    Let's first construct $a$ and $b$. For $t,i \in \Z$, define $f : \Z \times \Z \to \{0,1\}$ so that
    \begin{equation*}
        f(t,i)=
        \begin{cases}
            1 & \text{if } t+i \in A_\infty \\
            0 & \text{otherwise}
        \end{cases}
    \end{equation*}
Further, define $F: \Z \to \{0,1\}^{\diam(S)}$ to be the function where
    $$F(t) = \left(f(t,0), \dots, f(t,\diam(S)-1)\right)$$
    for all $t \in \Z$. Since the codomain $\{0,1\}^{\diam(S)}$ of $F$ is finite, by the pigeonhole principle, there must be $a,b$ where $0 \le a < b$ such that $F(a) = F(b)$.  For these values of $a$ and $b$, we prove that $A_\infty$ is $a$-forward periodic with period $b-a$ by induction on $i$. That is, we will prove that for all $i \ge 0$,
    \[a+i \in A_\infty \iff b+i \in A_\infty.\]
    
    Base cases ($i=0, 1, \dots, \diam(S)-1$): Recall $a$ and $b$ are fixed such that $F(a) = F(b)$. By definition of $F$, it means for all $0 \le i < \diam(S)$, $a + i \in A_{\infty}$ if and only if $b + i \in A_{\infty}$. 

    Inductive step: For the induction hypothesis, let \( m \geq \diam(S) - 1 \), and assume for all \( 0 \leq i \leq m \) that
    \[
    a + i \in A_\infty \iff b + i \in A_\infty.
    \]
    We now show $a+(m+1) \in A_\infty \iff b+(m+1) \in A_\infty$. 
    
    Suppose $a+(m+1)\notin A_\infty$. Since $a+(m+1)$ is not chosen in the algorithm, $a+(m+1) \in A_\infty + \diff(S)$. Thus, there exists $a_0 \in A_\infty$ and $s, t \in S$ with $s \ge t$ such that $a_0 + s-t = a+(m+1)$. Since $a_0 \in A_\infty$ and $a+(m+1) \notin A_\infty$, $s \neq t$, so $s>t$.

    Note $a_0 = a + (m+1) -(s-t)$. Since $s-t \in \diff(S)$ and $s \neq t$, $1 \le s-t \le \diam(S)$. Also, note $m+1 \ge \diam(S)$. Thus, $0 \le (m+1)-(s-t) \le m$.

    Let $b_0 = b+(m+1)-(s-t)$. By the induction hypothesis, $b_0 \in A_\infty$. Thus, $b+ (m+1) \in A_\infty + \diff(S)$, and by the construction of $A_\infty$, $b+(m+1)$ would not be chosen by the greedy packing algorithm, and $b+(m+1) \notin A_\infty$.

    The same argument shows that if $b+(m+1) \notin A_\infty$ then $a+m+1 \notin A_\infty$. Thus $A_\infty$ is $a$-forward periodic with period $b-a$.
\end{proof}

For the remainder of the section, let  $a$ and $b$ be as in Lemma \ref{Ainf_FP}, i.e. $A_\infty \subseteq \Z$ is $a$-forward periodic with period $b-a$.

\begin{lemma}
\label{Ainf_FP2}
If $a<b \in \Z$, and $A_\infty \subseteq \Z$ is $a$-forward periodic with period $b-a$, then  
    $$\lim_{n \to \infty}d(A_\infty, 0, n) = d(A_\infty, a, b).$$
\end{lemma}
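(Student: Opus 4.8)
The plan is to use the fact that ``$A_\infty$ is $a$-forward periodic with period $p := b-a$'' says exactly that $A_\infty \cap [a,\infty)$ is genuinely periodic with period $p$: for every integer $k \ge 0$ the block $[a+kp,\, a+(k+1)p-1]$ meets $A_\infty$ in exactly $c := |A_\infty \cap [a,b-1]| = p \cdot d(A_\infty,a,b)$ points. Everything lying to the left of $a$ contributes only a bounded number of elements, and this contribution will wash out after dividing by $n$.

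Concretely, I would first restrict attention to $n > a$ (the finitely many $n \le a$ cannot affect the limit) and apply Euclidean division to write $n - a = qp + r$ with $q = \lfloor (n-a)/p \rfloor \ge 0$ and $0 \le r < p$. Then I would split the counting interval as the disjoint union
$$[0,n-1] \;=\; [0,a-1] \,\sqcup\, [a,\, a+qp-1] \,\sqcup\, [a+qp,\, n-1],$$
where the middle piece is a union of $q$ full periods (empty when $q=0$) and the last piece has length exactly $r < p$. Counting the elements of $A_\infty$ in each piece, and using periodicity to evaluate the middle one as $qc$, gives
$$|A_\infty \cap [0,n-1]| \;=\; |A_\infty \cap [0,a-1]| + qc + |A_\infty \cap [a+qp,\,n-1]| \;=\; qc + \varepsilon_n,$$
where $\varepsilon_n := |A_\infty \cap [0,a-1]| + |A_\infty \cap [a+qp,\,n-1]|$ satisfies $0 \le \varepsilon_n \le a + p$, a bound independent of $n$.

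It then remains to divide by $n$ and pass to the limit. Since $qp = n - a - r$ with $a$ fixed and $0 \le r < p$, we have $q/n \to 1/p$ as $n \to \infty$, while $\varepsilon_n/n \to 0$; hence
$$d(A_\infty,0,n) \;=\; \frac{qc}{n} + \frac{\varepsilon_n}{n} \;\longrightarrow\; \frac{c}{p} \;=\; d(A_\infty,a,b),$$
which is the claim. I do not expect a genuine obstacle here: the argument is just Euclidean division together with the observation that boundary effects are $O(1)$. The only points deserving a moment's care are checking that the three pieces of the decomposition are truly disjoint and exhaust $[0,n-1]$ (so the middle full periods do not overlap the tail and indeed $r<p$), and noting that $d(A_\infty,a,b)$ is automatically a well-defined finite number, being simply $|A_\infty \cap [a,b-1]|$ divided by $b-a$.
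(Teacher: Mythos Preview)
Your proof is correct and follows essentially the same approach as the paper's: both split $[0,n-1]$ into an initial segment $[0,a-1]$, a union of $\lfloor(n-a)/(b-a)\rfloor$ full periods, and a bounded tail, then observe that the initial segment and tail contribute $O(1)$ and hence $o(1)/n$. Your version is somewhat more explicit about the Euclidean division and the error bound $\varepsilon_n \le a + p$, whereas the paper compresses this into a chain of limit equalities with an $o(1)$ term, but the underlying argument is the same.
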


\begin{proof}
    \begin{align*}
        \lim_{n \to \infty}d(A_\infty, 0, n) =& \lim_{n \to \infty}{\frac{|A_\infty \cap [0, n-1]|}{n}} \\
        =& \lim_{n \to \infty}{\frac{|A_\infty \cap [0, a-1]|}{n} + \lim_{n \to \infty}{\frac{|A_\infty \cap [a, n-1]|}{n}}} \\
        =& \lim_{n \to \infty}{\frac{|A_\infty \cap [a, n-1]|}{n}} \\
        =& \lim_{n \to \infty}\left({\frac{\lfloor{\frac{n-a}{b-a}\rfloor}|A_\infty \cap [a, b-1]|}{n} + o(1)} \right)\\
        =& \lim_{n \to \infty}{\left(\frac{n-a}{n}\right) \left(\frac{|A_\infty \cap [a, b-1]|}{b-a}\right)} \\
        =& \frac{|A_\infty \cap [a, b-1]|}{b-a} \\
        =& d(A_\infty, a, b)
    \end{align*}
\end{proof}

Given $x \in \Z$, and $a < b \in \Z$, define the \emph{remainder} $r(x)$ to be the integer where $0 \le \emph{r(x)} < b-a$ and 
$$x-a \equiv r(x) \mod(b-a).$$

We are finally ready to define the $S$-packing set $A$ produced by the greedy packing algorithm.  Given $A_\infty$, $a$, and $b$ as in Lemma \ref{Ainf_FP}, define $A \subseteq \Z$ to be the set such that 
\[x \in A \iff a + r(x) \in A_\infty \quad \text{for all} \quad x \in \mathbb{Z}.
\]

For example, if $S = \{0,4,5\}$, $A = \{\dots, -16, -14, -8, -6, 0, 2, 8, 10, 16, 18, 24, 26, \dots \}$.

By construction, $A$ is periodic with period $b-a$, and therefore  $d(A) = d(A, a, b)$. Furthermore, since for $i \ge 0$, $a+i \in A \iff a+i \in A_\infty$, $d(A,a,b) = d(A_\infty, a, b)$. Combining the two equations, we have
$$d(A) = d(A_\infty, a, b).$$

\begin{proposition}
    \label{d(A)_LB} Let $A$ be the set produced by the greedy packing algorithm.  Then $A$ is $S$-packing with $d(A) \ge \frac{1}{|\diff(S)|}$.
\end{proposition}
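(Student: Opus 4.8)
The plan is to establish two things separately: that $A$ is $S$-packing, and that $d(A) \ge \frac{1}{|\diff(S)|}$. For the packing property, I would first argue that $A_\infty$ is $S$-packing. This follows essentially by construction: whenever the greedy algorithm selects $t_i$, it chooses $t_i \notin A_{i-1} + \diff(S)$, so no two elements of $A_\infty$ differ by an element of $\diff(S)$; by Proposition~\ref{diff}, $A_\infty$ is $S$-packing. The more delicate point is transferring this to $A$, which is the periodic ``wrap-around'' of $A_\infty$. Here I would use the $a$-forward periodicity from Lemma~\ref{Ainf_FP} together with the fact that $\diff(S) \subseteq \{0, 1, \dots, \diam(S)\}$: if $x, y \in A$ with $x < y$ and $y - x = s - t \in \diff(S)$, then $y - x \le \diam(S) < b - a$, so $x$ and $y$ lie close enough that I can shift both by a common multiple of $b-a$ into the forward-periodic regime $[a, \infty)$ where $A$ and $A_\infty$ agree, producing two elements of $A_\infty$ at the same difference — contradicting that $A_\infty$ is $S$-packing. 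I expect this shifting argument (being careful that both $x$ and $y$ land in $[a,\infty)$, using $y-x$ small relative to the period) to be the main obstacle, since it requires handling the boundary between the periodic tail and the reflected initial segment cleanly.

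For the density bound, I would combine the results already assembled. We have $d(A) = d(A_\infty, a, b)$, and by Lemma~\ref{Ainf_FP2}, $d(A_\infty, a, b) = \lim_{n \to \infty} d(A_\infty, 0, n) = \lim_{n \to \infty} \frac{|A_\infty \cap [0, n-1]|}{n}$. So it suffices to show this last limit is at least $\frac{1}{|\diff(S)|}$. For this I would use Lemma~\ref{ti_UB}: since $A_\infty = \{t_0, t_1, t_2, \dots\}$ with $t_i \le i |\diff(S)|$, the first $i+1$ elements of $A_\infty$ all lie in $[0, i|\diff(S)|]$, i.e. in $[0, n-1]$ for $n = i|\diff(S)| + 1$. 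Hence $|A_\infty \cap [0, i|\diff(S)|]| \ge i + 1$, which gives $\frac{|A_\infty \cap [0,n-1]|}{n} \ge \frac{i+1}{i|\diff(S)|+1}$ along the subsequence $n = i|\diff(S)|+1$. Since the limit exists (by the periodicity established above, or directly), evaluating it along this subsequence yields $d(A_\infty, a, b) \ge \lim_{i \to \infty} \frac{i+1}{i|\diff(S)|+1} = \frac{1}{|\diff(S)|}$.

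Putting these together gives that $A$ is an $S$-packing set with $d(A) \ge \frac{1}{|\diff(S)|}$, as claimed. The only genuinely subtle step is the first one — verifying that wrapping $A_\infty$ into a periodic set does not create a forbidden difference — and the key fact making it work is that every forbidden difference is at most $\diam(S)$, which is strictly smaller than the period $b-a$, so forbidden pairs never straddle more than one period boundary and can always be translated into the region where $A$ coincides with the genuinely $S$-packing set $A_\infty$.
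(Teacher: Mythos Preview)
Your overall strategy matches the paper's: show $A_\infty$ is $S$-packing by construction, transfer this to the periodic extension $A$, and then deduce the density bound from Lemma~\ref{ti_UB} via Lemma~\ref{Ainf_FP2} and the identity $d(A)=d(A_\infty,a,b)$. The density half is essentially identical to the paper's argument (the paper phrases the counting as $|A_\infty\cap[0,n-1]|\ge n/|\diff(S)|$ for every $n$, but your subsequence version is equivalent once the limit is known to exist).

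There is one genuine gap in the packing half. You assert, and lean on as ``the key fact,'' that $\diam(S)<b-a$. Nothing in Lemma~\ref{Ainf_FP} gives this: the pigeonhole argument there only produces \emph{some} $0\le a<b$ with $F(a)=F(b)$, with no lower bound on $b-a$. Fortunately your shifting argument does not need it. Given $x<y$ in $A$ with $y-x\in\diff(S)$, choose any $k$ large enough that $x+k(b-a)\ge a$; then automatically $y+k(b-a)\ge a$ as well, and since $A$ is periodic with period $b-a$ and agrees with $A_\infty$ on $[a,\infty)$, both translates lie in $A_\infty$ at forbidden distance $y-x$, a contradiction. No comparison between $y-x$ and $b-a$ is required, and the worry about ``straddling more than one period boundary'' is irrelevant. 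The paper carries out exactly this idea, phrased via the remainder map: from $a+r(c),\,a+r(d)\in A_\infty$ and $r(c)+(d-c)\equiv r(d)\pmod{b-a}$ it gets $a+r(c)$ and $a+r(c)+(d-c)$ both in $A_\infty$. Drop the unproven inequality and your argument goes through.
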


\begin{proof}
    We first prove that $A$ is $S$-packing. Assume for the sake of contradiction that $A$ is not $S$-packing. Then, by Proposition ~\ref{diff}, there exist $c$, $d \in A$ with $c < d$ and $d-c \in \diff(S)$. By the construction of $A$, since $c$ and $d$ are in $A$, we know $a+r(c)$ and $a+r(d)$ are in $A_\infty$. Since $d-c \equiv r(d)-r(c) \pmod{b-a}$, we know $(d-c) + r(c) \equiv r(d) \pmod{b-a}$.  Thus since $A_\infty$ is $a$-forward periodic with period $b-a$, and $a + r(d) \in A_\infty$, we know $a + r(c) + (d-c) \in A_\infty$.  Since $a+r(c)$ and $a+r(c) + (d-c)$ are both in $A_\infty$, with  $d-c \in \diff(S)$, this contradicts the fact that, by construction, $A_\infty$ is $S$-packing. Thus $A$ is $S$-packing.
    
    Then, we show $d(A) \ge \frac{1}{|\diff(S)|}$. For all $i \ge 0$, $t_i \le i|\diff(S)|$ by Lemma \ref{ti_UB}. Hence, for all $n \ge 1$,
    $$|A_\infty \cap [0,n-1]| \ge \frac{n}{|\diff(S)|}.$$ 
    Rearranging the inequality, we have that for all $n \ge 1$,
    \[d(A_\infty, 0, n) = \frac{|A_\infty \cap [0,n-1]|}{n} \ge \frac{1}{|\diff(S)|}.\]

    Hence, 
    $$\lim_{n \to \infty}d(A_\infty, 0, n) \ge \lim_{n \to \infty}\frac{1}{|\diff(S)|} = \frac{1}{|\diff(S)|}.$$

    By Lemma \ref{Ainf_FP2}, 
    $$d(A_\infty, a, b) \ge \frac{1}{|\diff(S)|}.$$

    Since we have already shown that $d(A) = d(A_\infty,a,b)$, we conclude
    $$d(A) \ge \frac{1}{|\diff(S)|}.$$

    Thus $A$ is $S$-packing and $d(A) \ge \frac{1}{|\diff(S)|}$.
\end{proof}

\begin{corollary}
\label{gp_thm}
Every finite set $S \subseteq \Z$ has $d_p(S) \ge \frac{1}{|\diff(S)|}$.
\end{corollary}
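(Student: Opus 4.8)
\textbf{Proof proposal for Corollary~\ref{gp_thm}.}
The plan is to extract the statement directly from Proposition~\ref{d(A)_LB} together with the definition of $d_p(S)$. First I would note that if $S$ is finite and nonempty, then $\diff(S)$ is finite and nonempty (it always contains $0$), so $|\diff(S)| \ge 1$ and the quantity $\frac{1}{|\diff(S)|}$ is well defined; the edge case to keep in mind is $|S| = 1$, where $\diff(S) = \{0\}$ and the bound reads $d_p(S) \ge 1$, which is consistent since every subset of $\Z$ is $S$-packing in that case.

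Next, I would invoke the greedy packing algorithm described in Section~\ref{lb} on the input $S$, producing the set $A$. By Proposition~\ref{d(A)_LB}, this $A$ is $S$-packing and satisfies $d(A) \ge \frac{1}{|\diff(S)|}$. The one point that needs a sentence of justification is that $d(A)$ here is a genuine (two-sided) density: $A$ was constructed to be periodic with period $b-a$, so $\overline{d}(A) = \underline{d}(A) = d(A)$ exists, and in particular $\overline{d}(A) = d(A) \ge \frac{1}{|\diff(S)|}$.

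Finally, I would appeal to the definition $d_p(S) = \sup\{\overline{d}(A') : A' \text{ is } S\text{-packing}\}$. Since our $A$ is one such $S$-packing set, its upper density is a member of the set over which the supremum is taken, giving
\[
d_p(S) \;\ge\; \overline{d}(A) \;=\; d(A) \;\ge\; \frac{1}{|\diff(S)|},
\]
which is the claim. There is essentially no obstacle here: all the real work — building $A$, showing it is $S$-packing, and bounding its density — has already been done in Lemmas~\ref{ti_UB}--\ref{Ainf_FP2} and Proposition~\ref{d(A)_LB}; the corollary is just the repackaging of $d(A) \ge \frac{1}{|\diff(S)|}$ as a statement about the supremum $d_p(S)$.
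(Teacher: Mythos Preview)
Your proposal is correct and takes essentially the same approach as the paper: the paper's proof is the single line ``This follows directly from Proposition~\ref{d(A)_LB},'' and you have simply unpacked that sentence, making explicit the passage from $d(A) \ge \frac{1}{|\diff(S)|}$ to $\overline{d}(A) \ge \frac{1}{|\diff(S)|}$ (via periodicity of $A$) and then to $d_p(S)$ (via the definition as a supremum).
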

\begin{proof}
    This follows directly from Proposition \ref{d(A)_LB}.
\end{proof}

\begin{corollary}
\label{lowerbound}
The packing density of $\{0,1,4,6\}$ is lower bounded by $\frac{1}{7}$, i.e.
    $$d_p(\{0,1,4,6\}) \ge \frac{1}{7}.$$
\end{corollary}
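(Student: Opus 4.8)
The plan is to apply Corollary~\ref{gp_thm} directly to the set $S = \{0,1,4,6\}$. The only ingredient specific to this set is the cardinality of its difference set, so I would first record that computation: the positive pairwise differences of $\{0,1,4,6\}$ are $1-0=1$, $6-4=2$, $4-1=3$, $4-0=4$, $6-1=5$, and $6-0=6$, so together with $0$ we obtain $\diff(\{0,1,4,6\}) = \{0,1,2,3,4,5,6\}$, a set of cardinality $7$. This is exactly the fact already used in the proof of Corollary~\ref{upperbound}.

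With that in hand, Corollary~\ref{gp_thm} applied to $S = \{0,1,4,6\}$ immediately yields
$$d_p(\{0,1,4,6\}) \ge \frac{1}{|\diff(\{0,1,4,6\})|} = \frac{1}{7},$$
which is the claimed lower bound.

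There is essentially no obstacle to overcome at this stage: all of the substantive work — the greedy packing construction, the bound $t_i \le i|\diff(S)|$ of Lemma~\ref{ti_UB}, the forward-periodicity of $A_\infty$ furnished by Lemma~\ref{Ainf_FP}, and the density estimate of Proposition~\ref{d(A)_LB} that feeds into Corollary~\ref{gp_thm} — has already been carried out in full generality. The present corollary is merely the instantiation $|\diff(S)| = 7$. Combining it with Corollary~\ref{upperbound}, which supplies the matching upper bound $d_p(\{0,1,4,6\}) \le \frac{1}{7}$, establishes $d_p(\{0,1,4,6\}) = \frac{1}{7}$, completing the portion of the Main Theorem concerning this specific set; the remaining claim that $d_p(S) \ge \frac{1}{7}$ for \emph{every} cardinality-four $S$ would require a separate argument handling the possible values of $|\diff(S)|$.
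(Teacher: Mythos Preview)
Your proof is correct and matches the paper's own argument essentially verbatim: compute $\diff(\{0,1,4,6\}) = \{0,1,2,3,4,5,6\}$, note that it has cardinality $7$, and apply Corollary~\ref{gp_thm}. The additional remarks you include about combining with Corollary~\ref{upperbound} and about the general cardinality-four case are accurate but go beyond what this particular corollary requires.
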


\begin{proof}
    Since $\diff{(\{0,1,4,6\})} = \{0,1,2,3,4,5,6\}$, applying Corollary \ref{gp_thm},
    $$d_p(\{0,1,4,6\}) \ge \frac{1}{|\diff{(\{0,1,4,6\})}|} = \frac{1}{7}.$$ 
\end{proof}

Note that for $S = \{0,1,4,6\}$, the greedy packing algorithm produces the $S$-packing set $A = \{7n: n \in \Z\}$, which is densest possible by Corollary \ref{upperbound}.

We now prove the main theorem.
\begin{maintheorem} 
    If $S$ is a set of integers with cardinality four, then $d_p(S) \ge \frac{1}{7}$.  Furthermore, the set $\{0,1,4,6\}$ achieves this minimum packing density: $d_p(\{0,1,4,6\}) = \frac 1 7$.
\end{maintheorem}
\begin{proof}
    Let $S \subset \Z$ be an arbitrary set with $4$ integers. By Corollary~\ref{gp_thm}, $d_p(S) \ge \frac{1}{|\diff(S)|}$. Since $|\diff(S)| \le 7$ for all integer sets with cardinality $4$, we conclude
        $$d_p(S) \ge \frac{1}{7}.$$
    At the same time, combining the inequalities in Corollary \ref{upperbound} and \ref{lowerbound}, we have 
        $$d_p(\{0,1,4,6\}) = \frac{1}{7}.$$
    Hence, $\{0,1,4,6\}$ achieves the minimum packing density among all sets of four integers.
\end{proof}


\begin{thebibliography}{9}

\bibitem{Frankl2023}
N. Frankl, A. Kupavskii, and A. Sagdeev. Solution to a conjecture of
Schmidt and Tuller on one-dimensional packings and coverings. Proceedings of the American Mathematical Society, 151 : 2353--2362, 2023.

\bibitem{d__j__newman_1967}
D. J. Newman. Complements of finite sets of integers. Michigan Mathematical Journal, 14(4):481--486, 1967.

\bibitem{SchmidtTuller2008}
W. Schmidt and D. Tuller. Covering and packing in $\mathbb{Z}^n$ and $\mathbb{R}^n$, (i). Monatsh. Math., 153:265--281, 2008.

\bibitem{weinstein1976covering}
G. Weinstein. Some covering and packing results in number theory. Journal of Number Theory, 8:193--205, 1976.

\end{thebibliography}
\end{document}